\newtheorem{lemma}{Lemma}
\newtheorem{theorem}[lemma]{Theorem}
\newtheorem{example}[lemma]{Example}
\numberwithin{equation}{section}
\numberwithin{lemma}{section}
\newcommand{\N}{\mathbb{N}}    
\newcommand{\NN}{\mathbb{N}_0} 
\newcommand{\R}{\mathbb{R}}    
\newcommand{\bo}{\mathcal{O}}
\newcommand{\xa}{x}
\newcommand{\ya}{y}
\newcommand{\p}{\partial}
\newcommand{\be}{ \begin{equation} }
	\newcommand{\ee}{ \end{equation} }
\newcommand{\ind}{\Lambda}
\newcommand{\bv}{{\bm v}}
\begin{document}
	
	\title[]{The convergence proof of the sixth-order compact 9-point FDM for the 2D transport problem}
	
\author{Qiwei Feng}
\address{Department of Mathematics, University of Pittsburgh, Pittsburgh, PA 15260 USA.}
\email{qif21@pitt.edu, qfeng@ualberta.ca}

\thanks{Research supported in part by  the Mathematics Research Center, Department of Mathematics, University of Pittsburgh, Pittsburgh, PA, USA (Qiwei Feng).
}

	\makeatletter \@addtoreset{equation}{section} \makeatother

	\begin{abstract}
It is widely acknowledged that the convergence proof of the error in the $l_{\infty}$ norm  of the high-order finite difference method (FDM) and finite element method (FEM) in 2D is challenging. In this paper, we derive the sixth-order compact 9-point FDM with the explicit stencil for the 2D transport problem  with the constant coefficient and the Dirichlet boundary condition in a unit square.  The proposed sixth-order FDM forms an M-matrix for the any mesh size $h$ employing the uniform Cartesian mesh.  The explicit formula of our FDM also enables us to construct the comparison function with the explicit expression to rigorously  prove the sixth-order convergence rate of  the maximum pointwise error  by the discrete maximum principle. Most importantly, we demonstrate that the sixth-order convergence proof is valid for any mesh size $h$. The numerical results are consistent with sixth-order accuracy in the $l_{\infty}$ norm. Our theoretical convergence proof is clear and the proposed sixth-order FDM is straightforward to be implemented, facilitating the reproduction of our numerical results.

\end{abstract}	

\keywords{2D transport problem, sixth-order convergence rate, compact 9-point FDM, theoretical convergence proof, M-matrix, discrete maximum principle}

	\subjclass[2010]{65N06, 41A58}
\maketitle

\pagenumbering{arabic}

		\section{Introduction}

The singularly-perturbed  problem arises in various applications, including the heat  transport process, the Navier-Stokes equation, the lid-driven cavity flow problem, and the  magnetohydrodynamic (MHD) duct flow problem \cite{Chiu2009,Hsieh2010,Morton1996,Roos2008,Tian2007}.

For  the  singularly-perturbed reaction-diffusion problem $ -\epsilon  \Delta u + au  = f$ with the variable function $a$ and the Dirichlet boundary condition in a unite square, \cite{Clavero2005,Gracia2006}  proved the almost second-order and third-order maximum pointwise accuracy of the central and  compact finite difference methods (FDMs) respectively using the piecewise uniform
Shishkin mesh,   the higher-order finite element method (FEM) was proposed in \cite{Li1999}  and also extended to solve the quasilinear problem. Furthermore, \cite{Cai2020,Lin2012} constructed a dual FEM and a balanced FEM respectively for  the  singularly-perturbed reaction-diffusion problem on a bounded polygonal or smooth
domain.
For the singularly-perturbed convection-diffusion/transport problem $ -\epsilon  \Delta u + au_x+bu_y=f $ with  the Dirichlet boundary condition in a rectangular domain, \cite{Chiu2009}  derived a dispersion-relation-preserving dual-compact upwind FDM for the constant coefficient $(a,b)$, \cite{Ge2011} built a fourth-order compact FDM on nonuniform grids for the variable coefficient $(a,b)$, \cite{Han2009}  proposed a tailored-finite-point method 
to achieve the high
accuracy in the coarse mesh for the variable coefficient $(a,b)$, 
\cite{Layton1990,Layton1993}  proved the second-order convergence rate in the maximum norm of the compact FDM for the positive constant coefficient $(a,b)$, 
\cite{Tian2007} established the higher-order compact exponential FEM with the uniform mesh for the variable coefficient.
 In particular,  a tailored finite point method for  $ -\epsilon  \Delta u + u_x=0 $ on a 2D unbounded domain was proposed in \cite{Han2008}.
For the general singularly-perturbed convection-diffusion problem $ -\epsilon_1  u_{xx}-\epsilon_2  u_{yy} + au_x+bu_y+cu=f $ with two parameters $\epsilon_1,\epsilon_2$, the variable coefficient $(a,b,c)$ and the Dirichlet boundary condition in a unite square, \cite{Nhan2021} proved the first-order convergence rate in the maximum norm of the upwind FDM using  the Bakhvalov mesh for $\epsilon_1 =\epsilon_2$, 
\cite{Tong2021} derived the fourth-order compact FDM for $\epsilon_1 =\epsilon_2$, 
\cite{Li2004} proposed the first-order  FEM with weighted basis functions discretized on the unstructured triangular mesh for $\epsilon_1 \ne \epsilon_2$.
For systems of singularly-perturbed equations with Dirichlet and/or Neumann boundary conditions in a square domain,
\cite{Hsieh2016} constructed the second-order FDM with the
 uniform mesh and piecewise-uniform Shishkin mesh,
\cite{Hsieh2010} provided a new upwind FDM  using
the uniform mesh.

In this paper,  we consider the 2D singularly-perturbed convection-diffusion/transport problem in $\Omega=(0,1)^2$ as follows
\be\label{Model:Original}
		\begin{cases}
 -\epsilon  \Delta u + au_x+bu_y  = f \text{ in } \Omega,\\	
 	u =g \text{ on }  \partial \Omega,\\
 0<a, \ 0<b,	\ 0<\epsilon \ll \bo(1), \\
 a=\text{const}, \quad b=\text{const}.
 \end{cases}
\ee
 To concisely present our sixth-order FDM, we define
\be\label{a:epsilon}
a_{\epsilon}:=a/\epsilon, \qquad b_{\epsilon}:=b/\epsilon, \qquad f_{\epsilon}:=f/\epsilon.
\ee
So \eqref{Model:Original} is equivalent to 
\be\label{Model:New}
\begin{cases}
	-  \Delta u + a_{\epsilon}u_x+b_{\epsilon}u_y  = f_{\epsilon} \text{ in } \Omega,\\	
	u =g \text{ on }  \partial \Omega.
\end{cases}
\ee
As we assume that $0<a$,  $0<b$, $0<\epsilon \ll \bo(1)$, we can obtain that
\be
a_{\epsilon}\ge 1 \quad \text{and} \quad b_{\epsilon}\ge 1.
\ee
To derive the sixth-order FDM, we also assume that
the solution $u\in C^8(\overline{\Omega})$ and the source term $f\in C^6(\overline{\Omega})$.

The organization of this paper is as follows:

In \cref{FDMs:all:points}, we propose the sixth-order FDM with the M-matrix property for any mesh size $h$ for the model problem \eqref{Model:Original} with $a\ge b$ in \cref{theorem:interior} in \cref{sec:case1}. Furthermore, we rigorously prove the  sixth-order convergence rate of the error in the $l_{\infty}$ norm  for any mesh size $h$ by the explicit formula of the comparison function and the discrete maximum principle. We extend the above result to \eqref{Model:Original} with $a\le b$ in \cref{theorem:2:interior} in \cref{sec:case2}. In \cref{numerical:test}, numerical experiments are provided to verify the sixth-order accuracy of the proposed FDM measured by the maximum pointwise norm.
In \cref{sec:contr}, the main contributions of this paper are summarized.
	\section{FDMs on uniform Cartesian grids for the 2D transport problem}\label{FDMs:all:points}

	Motivated by the method in \cite{Feng2022,Feng2024}, we propose the sixth-order FDM with the M-matrix property and construct the explicit expression of the comparison function to prove the convergence rate. Since the derivation of such a stencil is complicated and needs messy notations, we directly present the explicit expression of the sixth-order FDM to help readers understand the convergence proof.

We use use the uniform Cartesian grid with the mesh size $h$ for the 2D transport problem \eqref{Model:Original} as follows:
\be \label{xiyj}
\{ (\xa_i,\ya_j)  : 0 \le i,j \le N,\  \xa_i=i h,  \ \ya_j=j h,  \  h=1/N, \ N\in \N\},
\ee
where $\N$ denotes the set of all positive integers.

Recall that $u$ is the exact solution of \eqref{Model:Original}. We define that $u_{i,j}:=u(x_i,y_j)$, $u_h$  is the numerical solution  computed by our sixth-order FDM with the uniform mesh size $h$, and $(u_h)_{i,j}$ is the value of $u_h$ at the grid point $(x_i,y_j)$.
To present our sixth-order FDM clearly, we define that
\be\label{fmn}
f_\epsilon^{(m,n)}=\frac{\partial^{m+n} f_\epsilon(x,y)}{\partial x^{m} \partial y^{n} }\Big|_{(x,y)= (\xa_i,\ya_j)},
\ee
\be
\begin{split}\label{r1:r15}
	& 		r_{1}	:=a_\epsilon  + b_\epsilon, \qquad r_{2}:=a_\epsilon^2  + b_\epsilon^2,\qquad r_{3}:=a_\epsilon b_\epsilon,  \\
	&r_{4} := (14 a_\epsilon^4 + 33 a_\epsilon^3 b_\epsilon    + 42 a_\epsilon^2b_\epsilon^2 + 33 a_\epsilon b_\epsilon^3    + 10 b_\epsilon^4)/240,  \\
	&r_{5} := -(19 a_\epsilon^3 +  30a_\epsilon^2 b_\epsilon    + 21 a_\epsilon b_\epsilon^2)  /240, \qquad r_{6}=-(21 a_\epsilon^2 b_\epsilon +  30a_\epsilon b_\epsilon^2    + 19 b_\epsilon^3)/240,  \\	
	&r_{7} := (23 a_\epsilon^2 +  30a_\epsilon b_\epsilon   + 21 b_\epsilon^2) /240,\qquad 
	r_{8}:=(21 a_\epsilon^2  +  30a_\epsilon b_\epsilon  + 23 b_\epsilon^2) /240, \\
	 &r_{9}:=  r_1a_\epsilon(5a_\epsilon^3 + 9a_\epsilon^2b_\epsilon + 12a_\epsilon b_\epsilon^2 + 10b_\epsilon^3)/480, \quad
	r_{10} := -r_1a_\epsilon(9a_\epsilon^2 + 10a_\epsilon b_\epsilon + 11b_\epsilon^2)480,  \\
	 &r_{11}:= -r_1b_\epsilon(11a_\epsilon^2 + 10a_\epsilon b_\epsilon + 9b_\epsilon^2)/480, \quad r_{12} := (13a_\epsilon^3 + 23 a_\epsilon^2 b_\epsilon+ 21a_\epsilon b_\epsilon^2+ 11b_\epsilon^3)/480,  \\
	 & r_{13} := (11a_\epsilon^3 + 21 a_\epsilon^2 b_\epsilon+ 23a_\epsilon b_\epsilon^2+ 13b_\epsilon^3)/480,  \\
	 & r_{14} := (8 a_\epsilon ^4 + 33 a_\epsilon ^3 b_\epsilon   + 42 a_\epsilon ^2 b_\epsilon ^2  + 33 a_\epsilon b_\epsilon ^3  + 16 b_\epsilon ^4)/240, \\
	&r_{15} :=(5 a_\epsilon ^5 - 22 a_\epsilon ^4 b_\epsilon  + 27 a_\epsilon ^3 b_\epsilon ^2 + 16 a_\epsilon ^2 b_\epsilon ^3 + 4 a_\epsilon  b_\epsilon ^4 + 42 b_\epsilon ^5)/480, 
\end{split}	
\ee
where $a_\epsilon, b_\epsilon, f_\epsilon$ are defined in \eqref{a:epsilon}.

\subsection{The sixth-order FDM for \bm{$a\ge b $}}\label{sec:case1}
In this section, we derive the sixth-order 9-point compact FDM for the model problem \eqref{Model:Original} with $a\ge b $. We construct the explicit expression of the sixth-order FDM with the M-matrix property. We also construct the explicit expression of the comparison function to theoretically prove the sixth-order convergence rate in the $l_{\infty}$ norm by the discrete maximum principle.
Recall that  $ r_1,\dots r_{15}$ are defined in \eqref{r1:r15}.
To help readers to implement our sixth-order FDM easily, we define that (LHS and RHS represent the left-hand side and right-hand side, respectively):\\
\textbf{coefficients of $h^0$ of the LHS of the FDM:}
\begin{align}\label{c:h:0}
	& c_{-1,\pm 1,0}:=c_{1,\pm 1,0}:=-1, \qquad c_{\pm 1,0,0}:=c_{0,\pm 1,0}:=-4, \qquad c_{0,0,0}:=20,
\end{align}
\textbf{coefficients of $h$ of the  LHS of the FDM:}
\be
\begin{split}\label{c:h:1}
	& c_{-1, -1, 1} := -r_1, \quad c_{-1, 0, 1} := -2(2a_{\epsilon} + b_{\epsilon}), \quad c_{-1, 1, 1} := -a_{\epsilon},\quad c_{0, -1, 1} := -2(a_{\epsilon} +2b_{\epsilon}),  \\
	& c_{0, 0, 1} := 10r_1, \quad c_{0, 1, 1} := -2a_{\epsilon}, \quad c_{1, -1, 1} := -b_{\epsilon}, \quad c_{1, 0, 1} := -2b_{\epsilon}, \quad c_{1, 1, 1} := 0,
\end{split}
\ee
\textbf{coefficients of $h^2$ of the  LHS of the FDM:}
\begin{align}\label{c:h:2}
	& c_{-1, -1, 2} := -r_1^2/2, \quad c_{-1, 0, 2} :=- ( 41a_{\epsilon}^2 + 40a_{\epsilon}b_{\epsilon} +11b_{\epsilon}^2)/20, \quad c_{-1, 1, 2} := -a_{\epsilon}^2/2, \notag  \\
	&  c_{0, -1, 2} := -(  11a_{\epsilon}^2+ 40a_{\epsilon}b_{\epsilon} +41b_{\epsilon}^2)/20, \quad c_{0, 0, 2} := (21r_2 + 25a_{\epsilon}b_{\epsilon})/5,  \\
	& c_{0, 1, 2} := -(11a_{\epsilon}^2 + b_{\epsilon}^2)/20, \quad c_{1, -1, 2} := -b_{\epsilon}^2/2, \quad c_{1, 0, 2} := -(a_{\epsilon}^2+ 11b_{\epsilon}^2)/20, \quad c_{1, 1, 2} := 0, \notag
\end{align}
\textbf{coefficients of $h^3$ of the  LHS of the FDM:}
\begin{align}\label{c:h:3}
	& c_{-1, -1, 3} := -r_1^3/6, \quad c_{-1, 0, 3} :=- (86a_\epsilon^3 + 123a_\epsilon^2b_\epsilon + 66a_\epsilon b_\epsilon^2 + 13b_\epsilon^3)/120, \notag\\
	& c_{-1, 1, 3} := -a_\epsilon^3/6, \quad c_{0, -1, 3} := -(13a_\epsilon^3 + 66a_\epsilon^2b_\epsilon  + 123 a_\epsilon b_\epsilon^2 + 86b_\epsilon^3)/120,  \\
	& c_{0, 0, 3} := 19 (a_\epsilon^3+b_\epsilon^3)/15 + 21 r_1r_3/10, \quad c_{0, 1, 3} := -(13a_\epsilon^3 + 3a_\epsilon b_\epsilon^2)/120, \notag \\
	& c_{1, -1, 3} := -b_\epsilon^3/6, \quad c_{1, 0, 3} := -(3a_\epsilon^2b_\epsilon  + 13b_\epsilon^3)/120, \quad c_{1, 1, 3} := 0, \notag
\end{align}
\textbf{coefficients of $h^4$ of the  LHS of the FDM:}
\be
\begin{split}\label{c:h:4}
	& c_{-1, -1, 4} := -r_1^4/24, \quad c_{-1, 0, 4} := -(23a_\epsilon^4 + 43a_\epsilon^3 b_\epsilon + 33a_\epsilon^2b_\epsilon^2 + 13a_\epsilon b_\epsilon^3)/120, \\
	& c_{-1, 1, 4} := -a_\epsilon^4/24, \quad c_{0, -1, 4} := -(2a_\epsilon^4 + 13a_\epsilon^3 b_\epsilon + 33a_\epsilon^2b_\epsilon^2 + 43a_\epsilon b_\epsilon^3 + 21b_\epsilon^4)/120, \\
	& c_{0, 0, 4} := (37a_\epsilon^4+29b_\epsilon^4)/120 + 19r_2r_3/30 + 4r_3^2/5, \quad c_{0, 1, 4} := (b_\epsilon^4- a_\epsilon^4)/60,\\
	&  c_{1, -1, 4} := -b_\epsilon^4/24, \quad c_{1, 0, 4} := 0, \quad c_{1, 1, 4} := 0,
\end{split}
\ee
\textbf{coefficients of $h^5$ of the  LHS of the FDM:}
\begin{align}\label{c:h:5}
	& c _{-1, -1, 5} := (b_\epsilon^5 -5 a_\epsilon^5 - 20 a_\epsilon^4 b_\epsilon - 39 a_\epsilon^3 b_\epsilon^2 - 41 a_\epsilon^2 b_\epsilon^3 - 16 a_\epsilon b_\epsilon^4 )/480, \notag  \\
	& c _{-1, 0, 5} := -(9 a_\epsilon^5 + 23 a_\epsilon^4 b_\epsilon + 23 a_\epsilon^3 b_\epsilon^2 + 12 a_\epsilon^2 b_\epsilon^3 + 4 a_\epsilon b_\epsilon^4 + b_\epsilon^5)/240,  \notag \\
	& c _{-1, 1, 5} := (b_\epsilon^5 -5 a_\epsilon^5+ a_\epsilon^3 b_\epsilon^2 - a_\epsilon^2 b_\epsilon^3 + 4 a_\epsilon b_\epsilon^4 )/480, \\
	& c _{0, -1, 5} := - r_1b_\epsilon (a_\epsilon^2 + a_\epsilon b_\epsilon + 2 b_\epsilon^2)  (4 a_\epsilon + 5 b_\epsilon)/240,  \notag \\
	& c _{0, 0, 5} := (14 a_\epsilon^5 + 37 a_\epsilon^4 b_\epsilon + 55 a_\epsilon^3 b_\epsilon^2 + 55 a_\epsilon^2 b_\epsilon^3 + 33 a_\epsilon b_\epsilon^4 + 10 b_\epsilon^5)/240,  \notag \\
	& c _{0, 1, 5} := 0, \quad c _{1, -1, 5} := 0, \quad c _{1, 0, 5} := 0, \quad c _{1, 1, 5} := 0, \notag 
\end{align}
\textbf{coefficients of $h^6$ of the  LHS of the FDM:}
\begin{align}\label{c:h:6}
	& c_{-1, -1, 6} := -(5a_\epsilon + 4b_\epsilon)\alpha/a_\epsilon, \quad c_{-1, 0, 6} := 4(b_\epsilon - a_\epsilon)\alpha/a_\epsilon, \quad c_{-1, 1, 6} := -\alpha,   \notag \\
	& c_{0, -1, 6} := 0, \quad c_{0, 0, 6} := 20\alpha, \quad c_{0, 1, 6} := -4\alpha, \quad c_{1, -1, 6} := -\alpha, \quad c_{1, 0, 6} := -4\alpha, \\
	& c_{1, 1, 6} := -\alpha, \quad \alpha:=a_\epsilon^2r_1(5a_\epsilon^3 + 9a_\epsilon^2b_\epsilon + 12a_\epsilon b_\epsilon^2 + 10b_\epsilon^3)/1920,		\notag
\end{align}
\textbf{the  RHS of the FDM:}
\be
	\begin{split}\label{F:ij}
	F_{i,j}	&:=6f_{_\epsilon}+3h	r_{1} f_{_\epsilon}+ h^2 ((21r_2 + 30r_3)f_{_\epsilon}/20 - 	\bv \cdot \nabla f_{_\epsilon}/2 + \Delta f_{_\epsilon}/2) \\
	& \quad +{h^3r_{1}}	((11 r_2/40 + r_3/4 ) f_{_\epsilon} -  (	\bv \cdot \nabla f_{_\epsilon} - \Delta f_{_\epsilon})/4)\\
	&\quad + h^4 ( r_{4} f_{_\epsilon} +r_{5} f_{_\epsilon}^{(1, 0)}  +r_{6} f_{_\epsilon}^{(0, 1)}+r_{7} f_{_\epsilon}^{(2, 0)}+ r_{3}  f_{_\epsilon}^{(1, 1)}/15+ r_{8} f_{_\epsilon}^{(0, 2)}-  2\zeta )\\
	&\quad + h^5 (  r_{9} f_{_\epsilon} +r_{10} f_{_\epsilon}^{(1, 0)}  +r_{11} f_{_\epsilon}^{(0, 1)}+r_{12} f_{_\epsilon}^{(2, 0)}+ r_{1}r_{3}  f_{_\epsilon}^{(1, 1)}/30+ r_{13} f_{_\epsilon}^{(0, 2)} \\
	&\quad \qquad -  r_{1}\zeta+ r_{1}( \Delta^2  f_{_\epsilon} +2f_{_\epsilon}^{(2, 2)} )/120),
\end{split}
\ee
where
\be\label{zeta}
\bv:= (a_\epsilon,  b_\epsilon),\qquad \zeta:=(a_\epsilon f_\epsilon^{(3, 0)}  +  2b_\epsilon f_\epsilon^{(2, 1)} +  2a_\epsilon f_\epsilon^{(1, 2)} + b_\epsilon f_\epsilon^{(0, 3)})/60,
\ee
 $r_1,\dots r_{15}$ are defined in \eqref{r1:r15} and  each $f_\epsilon^{(m,n)}$ is defined in \eqref{fmn}.

Using the above \eqref{c:h:0}--\eqref{zeta}, we prove the sixth-order convergence rate of our FDM in the following \cref{theorem:interior}.
\begin{theorem}\label{theorem:interior} Assume that the solution $u$, the source term $f$, and the constant coefficient $(a,b)$ of \eqref{Model:Original} satisfy that $u\in C^8(\overline{\Omega})$, $f\in C^6(\overline{\Omega})$, and $a\ge b$. Let the grid point $(x_i,y_j)\in \Omega$ and define the following 9-point compact FDM:
	\be\label{FDMs:u:interior}
		\mathcal{L}_h (u_h)_{i,j} :=\frac{1}{h^2}\sum_{k,\ell=-1}^1 C_{k,\ell} (u_h)_{i+k,j+\ell}=F_{i,j}\Big|_{(x,y)=(x_i,y_j)} \quad \text{with} \quad C_{k,\ell}:=\sum_{p=0}^6 c_{k,\ell ,p}h^p,
	\ee
where $c_{k,\ell,p}$ is defined in \eqref{c:h:0}--\eqref{c:h:6} and $F_{i,j}$ is defined in \eqref{F:ij}. Then
\be\label{sign:cd}
C_{k,\ell}\le  0 \quad \text{if} \quad (k,\ell)\ne (0,0), \quad   C_{0,0}>  0, \quad  \text{for any mesh size } h>0, \quad \text{and} \quad \sum_{k,\ell=-1}^1 C_{k,\ell}=0.
\ee
Furthermore,   $	\mathcal{L}_h (u_h)_{i,j}$ in \eqref{FDMs:u:interior}   with $u=g$ on $\partial \Omega$  forms an M-matrix for any mesh size $h>0$,  and
	\be\label{convergence:order:6}
\|u-u_h\|_{\infty}\le Ch^6, \quad  \text{for any mesh size } h>0,
	\ee
	where  $C$ is independent of  $h$.
\end{theorem}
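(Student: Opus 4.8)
The plan is to establish the three structural facts in \eqref{sign:cd} first, then deduce the M-matrix property, and finally obtain the sixth-order error bound \eqref{convergence:order:6} via a comparison-function argument inside the discrete maximum principle.

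\textbf{Step 1: sign and zero-sum conditions on the stencil.}
First I would verify the three claims in \eqref{sign:cd} directly from the explicit coefficient formulas \eqref{c:h:0}--\eqref{c:h:6}. For each off-diagonal pair $(k,\ell)\ne(0,0)$ I would collect $C_{k,\ell}=\sum_{p=0}^6 c_{k,\ell,p}h^p$ as a polynomial in $h$ with coefficients that are polynomials in $a_\epsilon,b_\epsilon$, and show it is nonpositive for every $h>0$. Because $a_\epsilon\ge 1$, $b_\epsilon\ge 1$, and (in this section) $a\ge b$, i.e. $a_\epsilon\ge b_\epsilon$, most coefficients $c_{k,\ell,p}$ are already manifestly $\le 0$; the mixed-sign ones (for instance $c_{0,1,4}=(b_\epsilon^4-a_\epsilon^4)/60\le 0$ under $a_\epsilon\ge b_\epsilon$, or the $h^5$ terms $c_{-1,\pm1,5}$) need the hypothesis $a_\epsilon\ge b_\epsilon$ to pin down the sign. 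For $C_{0,0}$ I would show the dominant $h^0$ term $20$ plus the remaining positive contributions keeps it strictly positive for all $h>0$. The zero-sum identity $\sum_{k,\ell}C_{k,\ell}=0$ I would check level by level in $h$: at each power $p$ the nine coefficients $c_{k,\ell,p}$ should sum to zero, which is the discrete analogue of the operator annihilating constants and is the cleanest sanity check on the construction.

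\textbf{Step 2: M-matrix property and discrete maximum principle.}
Once \eqref{sign:cd} holds, the global matrix $A$ assembled from $\mathcal L_h$ with Dirichlet data $u=g$ on $\partial\Omega$ has positive diagonal, nonpositive off-diagonal entries, and is weakly diagonally dominant (zero row sums at interior rows, with strict dominance in rows adjacent to the boundary because boundary couplings are moved to the right-hand side). Irreducibility via the connectivity of the nine-point stencil then upgrades this to irreducible diagonal dominance, so $A$ is an irreducibly diagonally dominant $L$-matrix and hence a nonsingular M-matrix with $A^{-1}\ge 0$. This is exactly the setting in which the discrete maximum principle holds: if $\mathcal L_h w_{i,j}\ge 0$ at all interior nodes and $w\le 0$ on $\partial\Omega$, then $w\le 0$ throughout, and likewise with the inequalities reversed.

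\textbf{Step 3: consistency and the comparison-function estimate.}
Let $e_{i,j}:=u_{i,j}-(u_h)_{i,j}$. Applying $\mathcal L_h$ to the exact solution and subtracting \eqref{FDMs:u:interior} gives $\mathcal L_h e_{i,j}=\tau_{i,j}$, the truncation error. Using $u\in C^8(\overline\Omega)$ and $f\in C^6(\overline\Omega)$ together with Taylor expansion, the stencil was engineered so that $F_{i,j}$ reproduces the consistent part through order $h^5$; hence the local truncation error satisfies $|\tau_{i,j}|\le C_0 h^6$ for a constant $C_0$ depending on bounds of the derivatives of $u$ and $f$ but not on $h$. The final step is to build an explicit comparison (barrier) function $\Phi$, vanishing on $\partial\Omega$, with $\mathcal L_h\Phi_{i,j}\ge C_0 h^6$ at every interior node and $\|\Phi\|_\infty\le C h^6$; the explicit, polynomial-in-$(a_\epsilon,b_\epsilon)$ form of the stencil is precisely what makes such a $\Phi$ writable in closed form (the source remarks that the explicit formula enables constructing the comparison function). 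Then $\mathcal L_h(\pm e-\Phi)\le 0$ interior and $\pm e-\Phi\le 0$ on the boundary, so by the discrete maximum principle from Step 2, $\pm e_{i,j}\le \Phi_{i,j}\le Ch^6$, giving \eqref{convergence:order:6} uniformly in $h$.

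\textbf{Main obstacle.}
The hardest part is Step 3, specifically producing a comparison function whose discrete action $\mathcal L_h\Phi$ dominates the $O(h^6)$ truncation error with a bound on $\|\Phi\|_\infty$ that is uniform in $h$ \emph{and} does not degrade as $\epsilon\to 0$ (equivalently as $a_\epsilon,b_\epsilon\to\infty$). Because $\mathcal L_h$ depends on $h$ through all powers $h^0,\dots,h^6$ and on the possibly large parameters $a_\epsilon,b_\epsilon$, one must choose $\Phi$ so that $\mathcal L_h\Phi$ has a clean lower bound for \emph{every} $h>0$ rather than only in the fine-mesh limit; verifying that single inequality for arbitrary $h$, together with confirming $|\tau_{i,j}|\le C_0h^6$ with an $\epsilon$-robust constant, is where the real work lies and is the claim the explicit construction is designed to make tractable.
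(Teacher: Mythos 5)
Your outline reproduces the paper's proof structure exactly (order-$h^6$ consistency via Taylor expansion, sign and zero-sum verification of the stencil under $a_\epsilon\ge b_\epsilon\ge 1$, M-matrix, discrete maximum principle, barrier function), and your Steps 1--2 would go through essentially as the paper's Steps 1--2 do. The genuine gap is the one you flag yourself: you never produce the comparison function, and that construction is the entire technical content of the convergence argument. The paper's choice is the $h$-independent quadratic
\[
\theta(x,y)=(x-a_\epsilon)^2+(y-b_\epsilon)^2,
\]
centered at $(a_\epsilon,b_\epsilon)$, which lies outside $\overline{\Omega}=[0,1]^2$ because $a_\epsilon,b_\epsilon\ge 1$. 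Writing $\tilde{\mathcal L}_h:=-\mathcal L_h$, one computes $\tilde{\mathcal L}_h(\theta)_{i,j}=\sum_{p=0}^{6}\phi_p h^p$ exactly (the expansion of a quadratic terminates, so no remainder issues arise), and each $\phi_p$ is an explicit polynomial in $a_\epsilon,b_\epsilon,x_i,y_j$ that is bounded below, using $0<x_i,y_j<1$ and $a_\epsilon\ge b_\epsilon\ge 1$, by a nonnegative quantity, with $\phi_0\ge 24$. Hence $\tilde{\mathcal L}_h(\theta)_{i,j}\ge 24$ for every $h>0$, and the scaled barrier $\tfrac{1}{24}c_3h^6\theta$ dominates the truncation error while having maximum $\tfrac{1}{24}(a_\epsilon^2+b_\epsilon^2)c_3h^6$ on $\overline\Omega$. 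Until an inequality of this type is actually verified for all $h>0$, the proof is a plan rather than a proof.

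Two of your side conditions should also be adjusted. First, the barrier need not (and here does not) vanish on $\partial\Omega$: since $E_h=0$ there, all that is used is $\theta\ge 0$ together with an $O(1)$ upper bound $\theta\le a_\epsilon^2+b_\epsilon^2$ on $\overline\Omega$, which after the $h^6$ scaling yields the final estimate. Second, your demand that the constant not degrade as $\epsilon\to 0$ is stronger than what the theorem asserts: the paper's $C=\tfrac{1}{24}(a_\epsilon^2+b_\epsilon^2)c_3$ depends on $\epsilon$ both through $a_\epsilon=a/\epsilon$, $b_\epsilon=b/\epsilon$ and through the eighth derivatives of $u$ entering $c_3$; only independence of $h$ is claimed or proved. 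Finally, your route to the maximum principle via irreducible diagonal dominance and $A^{-1}\ge 0$ is a legitimate alternative to the paper's direct contradiction argument at an interior maximizer, but it is packaging, not new substance.
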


\begin{proof}
\textbf{Step 1 (the sixth	order of consistency):}		We define 
\be \label{ind}
\ind_{M+1}:=\{ (m,n)\in \NN^2 \; : \;  m+n\le M+1 \}, \qquad M+1\in \NN,\qquad \NN:=\N\cup\{0\},
\ee
and $\N$ denotes the set of all positive integers.
Then the Taylor expansion at the grid point $(\xa_i,\ya_j)\in \Omega$ with the Lagrange remainder can be written as 
\[
u(x_i+x,y_j+y)
=
\sum_{(m,n)\in \ind_{M+1}}
\frac{\partial^{m+n}u(\xa_i,\ya_j) }{\partial x^m \partial y^n}\frac{x^my^n}{m!n!}+\sum_{\substack{0\le m,n \le M+2 \\ m+n=M+2}} \frac{\partial^{M+2}u(\xi,\eta) }{\partial x^m \partial y^{n}}\frac{x^my^{n}}{m!n!},
\]
where $\xi \in (x_i-|x|,x_i+|x|)$ and $\eta \in (y_j-|y|,y_j+|y|)$. Choose $M=6$, and $x,y=0,\pm h,$
\be \label{taylor:u}
u(x_i+x,y_j+y)
=
\sum_{(m,n)\in \ind_{7}}
\frac{\partial^{m+n}u(\xa_i,\ya_j) }{\partial x^m \partial y^n}\frac{x^my^n}{m!n!}+c_1h^8, 
\ee
where $c_1$ only depends on $\frac{\partial^{8}u(\xi,\eta) }{\partial x^m \partial y^{n}}$ with $0\le m,n \le 8$,  $m+n=8$, $\xi \in (x_i-|x|,x_i+|x|)$ and $\eta \in (y_j-|y|,y_j+|y|)$. As $u\in C^8(\overline{\Omega})$, $|c_1|<\infty$. 
By the definition of $	\mathcal{L}_h (u_h)_{i,j} $ in \eqref{FDMs:u:interior}, we also define that
\be\label{L:h:u:1}
	\mathcal{L}_h (u)_{i,j} :=\frac{1}{h^2}\sum_{k,\ell=-1}^1 C_{k,\ell} u_{i+k,j+\ell}=\frac{1}{h^2}\sum_{k,\ell=-1}^1 C_{k,\ell} u(\xa_i+kh,\ya_j+\ell),
\ee
for any $(x_i,y_j)\in \Omega$.
	Let 
	\be\label{L:u}
	\mathcal{L} u:=-  \Delta u + a_{\epsilon}u_x+b_{\epsilon}u_y.
	\ee
Plug the defined $C_{k,\ell}$ in \eqref{FDMs:u:interior} with \eqref{c:h:0}--\eqref{c:h:6} and \eqref{taylor:u} into \eqref{L:h:u:1}, after the simplification, we obtain
\be\label{L:h:u:2}
\mathcal{L}_h (u)_{i,j} :=\sum_{p=0}^5C_p h^p\Big|_{(x,y)=(x_i,y_j)}+c_2h^6, \quad \text{for any }  h>0,
\ee
where $0<c_2$ is independent of $h$,
	\begin{align}\label{C0:C5}
	&C_0=6\mathcal{L} u,\qquad C_1=3r_{1} \mathcal{L} u, \notag\\
	&C_2= (21r_2 + 30r_3)\mathcal{L} u/20 - 	(\bv \cdot \nabla \mathcal{L} u - \Delta \mathcal{L} u) /2, \notag\\
	&C_3=\tfrac{r_{1}}{40}	(11 r_2 + 10 r_3 ) \mathcal{L} u - \tfrac{r_{1}}{4} (	\bv \cdot \nabla \mathcal{L} u - \Delta \mathcal{L} u),\notag\\
	&C_4=  r_{4} \mathcal{L} u +r_{5} \mathcal{L} u_x  +r_{6} \mathcal{L} u_y+r_{7} \mathcal{L} u_{xx}+ r_{3} \mathcal{L} u_{xy}/15+ r_{8} \mathcal{L} u_{yy} \\
	&\qquad -   (a_\epsilon \mathcal{L} u_{xxx}  + 2 b_\epsilon \mathcal{L} u_{xxy} +  2a_\epsilon \mathcal{L} u_{xyy} + b_\epsilon \mathcal{L} u_{yyy})/30,  \notag \\
	&C_5=   r_{9} \mathcal{L} u +r_{10} \mathcal{L} u_{x}  +r_{11} \mathcal{L} u_{y}+r_{12} \mathcal{L} u_{xx}+ r_{1}r_{3}  \mathcal{L} u_{xy}/30+ r_{13} \mathcal{L} u_{yy} \notag\\
	&\qquad -  r_{1}(a_\epsilon \mathcal{L} u_{xxx}  +  2b_\epsilon \mathcal{L} u_{xxy} +  2a_\epsilon \mathcal{L} u_{xyy} + b_\epsilon \mathcal{L} u_{yyy})/60 \notag\\
	&\qquad + r_{1}( \Delta^2  \mathcal{L} u +2 \mathcal{L} u_{xxyy}  )/120.\notag
\end{align}	
From \eqref{L:u}, we observe that $	\mathcal{L} u=f_{\epsilon}$ and  $	\mathcal{L} \tfrac{\p ^{m+n}u}{\p x^m \p y^n}=\tfrac{\p ^{m+n} f_{\epsilon}}{\p x^m \p y^n} $. Now \eqref{F:ij}, $\mathcal{L}_h(u_h)$ in \eqref{FDMs:u:interior}, \eqref{L:h:u:2}, and \eqref{C0:C5} imply that
\be\label{truncation:error:h:6}
|\mathcal{L}_h (u_h)_{i,j}-\mathcal{L}_h (u)_{i,j}|	\le c_3 h^6,  \quad \text{for any }  h>0 \text{ and } (x_i,y_j)\in \Omega,
\ee	
where $c_3=\max_{(x_i,y_i)\in \Omega} |c_2|$, and $c_3$ is independent of $h$.
	
\textbf{Step 2 (the M-matrix property):}
By \eqref{c:h:0}--\eqref{c:h:6}, $a_\epsilon \ge 1$, $b_\epsilon \ge 1$, and $a_\epsilon \ge b_\epsilon$, it is straightforward to verify that
\be	\label{ckl:p:0:5}
\begin{split}
& c_{k,\ell,0}<  0 \quad \text{if} \quad (k,\ell)\ne (0,0), \quad \text{and} \quad  c_{0,0,0}>  0,\\	
& c_{k,\ell,p}\le  0 \quad \text{if} \quad (k,\ell)\ne (0,0), \quad \text{and} \quad  c_{0,0,p}\ge  0, \quad \text{for} \quad p=1,2,3,4,6,\\
& c_{k,\ell,5}\le  0 \quad \text{if} \quad (k,\ell)\notin\{(-1,-1), (-1,1), (0,0)\}, \quad \text{and} \quad  c_{0,0,5}\ge  0.
\end{split}
\ee
We set that
\[
a_\epsilon =b_\epsilon+\omega, \quad \text{with} \quad \omega\ge 0, \quad a_\epsilon \ge 0, \quad b_\epsilon \ge 0.
\]
Then $ c _{-1, -1, 5}$ and $ c _{-1, 1, 5}$ in \eqref{c:h:5} lead to
\be\label{ckl:p5}
\begin{split}
	& c _{-1, -1, 5} = -(120 b_\epsilon ^5 + 320 \omega b_\epsilon ^4 + 328 \omega^2 b_\epsilon ^3 + 169 \omega^3 b_\epsilon ^2 + 45  \omega^4 b_\epsilon + 5\omega^5)/480
	\le 0,   \\
	& c _{-1, 1, 5} = -(20 \omega b_\epsilon ^4 + 48\omega^2 b_\epsilon ^3 + 49\omega^3 b_\epsilon ^2 + 25\omega^4 b_\epsilon  + 5\omega^5)/480
	\le 0.
\end{split}	
\ee
So, $C_{k,\ell}$ in \eqref{FDMs:u:interior} satisfies that
\be\label{M:Matrix}
C_{k,\ell}\le  0 \quad \text{if} \quad (k,\ell)\ne (0,0), \quad \text{and} \quad  C_{0,0}>  0, \quad \text{for any } h>0.
\ee
On the other hand, by the direct calculation, $C_{k,\ell}$ in \eqref{FDMs:u:interior} also satisfies that
\be \label{Ckl:sum}
\sum_{k,\ell=-1}^1 C_{k,\ell}=0, \quad \text{for any } h>0.
\ee

According to \cite{LiZhang2020}, an M-matrix is a square matrix with positive diagonal entries, non-positive off-diagonal entries, and non-negative row sums for all rows, with at least one row sum strictly positive. So
the sign condition \eqref{M:Matrix}  and summation condition \eqref{Ckl:sum} with the Dirichlet boundary condition $u=g$ on $\partial \Omega$ indicate that \eqref{FDMs:u:interior} yields an M-matrix for any mesh size $h$.

\textbf{Step 3 (construct the comparison function):}
	We define that
\be\label{L:tilde}
	\mathcal{\tilde{L}}_h:=	-\mathcal{L}_h, \qquad  \theta(x,y):=(x-a_\epsilon)^2+(y-b_\epsilon)^2,
\ee
where $\theta(x,y)$ is the comparison function. 
Then
\[
	\mathcal{\tilde{L}}_h(\theta)_{i,j} =-\frac{1}{h^2}\sum_{k,\ell=-1}^1 C_{k,\ell} \theta_{i+k,j+\ell}=-\frac{1}{h^2}\sum_{k,\ell=-1}^1 C_{k,\ell} \theta(x_i+kh,y_j+\ell h), \quad (x_i,y_j)\in \Omega.
\]
By the direct calculation, 
\be\label{L:tilde:2}
\mathcal{\tilde{L}}_h(\theta)_{i,j}=\sum_{p=0}^6 \phi_ph^p, \quad \text{for any }  h>0 \text{ and } (x_i,y_j)\in \Omega,
\ee
where 
\begin{align}
	&\phi_0=12(a_\epsilon^2 - a_\epsilon x_i + b_\epsilon^2 - b_\epsilon y_j + 2), \notag\\
	&\phi_1=6(a_\epsilon + b_\epsilon)(a_\epsilon^2 - a_\epsilon x_i + b_\epsilon^2 - b_\epsilon y_j + 2), \notag\\
	&\phi_2=(21a_\epsilon^4 + (30b_\epsilon - 21 x_i)a_\epsilon^3 + (42b_\epsilon^2 - (30 x_i + 21 y_j)b_\epsilon + 52)a_\epsilon^2 \notag \\
	&\qquad + (30b_\epsilon^3 - (21 x_i + 30 y_j)b_\epsilon^2 + 60b_\epsilon)a_\epsilon + 21b_\epsilon^4 - 21 y_jb_\epsilon^3 + 52b_\epsilon^2)/10,	\notag\\
	&\phi_3=(11a_\epsilon^5 + (21b_\epsilon - 11 x_i)a_\epsilon^4 + (32b_\epsilon^2 - 21b_\epsilon x_i - 11b_\epsilon y_j + 32)a_\epsilon^3\notag \\
	& \qquad  + (32b_\epsilon^3 - 21b_\epsilon^2 x_i - 21b_\epsilon^2 y_j + 52b_\epsilon)a_\epsilon^2 + (21b_\epsilon^4 - 11b_\epsilon^3 x_i - 21b_\epsilon^3 y_j + 52b_\epsilon^2)a_\epsilon \notag \\
	& \qquad+ 11b_\epsilon^5 - 11 y_jb_\epsilon^4 + 32b_\epsilon^3)/20, \notag \\
	&\phi_4=(14a_\epsilon^6 + (33b_\epsilon - 14 x_i)a_\epsilon^5 + (56b_\epsilon^2 - 33b_\epsilon x_i - 14b_\epsilon y_j + 47)a_\epsilon^4 + (66b_\epsilon^3 - 42b_\epsilon^2 x_i \notag \\
&\qquad - 33b_\epsilon^2 y_j + 96b_\epsilon)a_\epsilon^3 + (52b_\epsilon^4 - 33b_\epsilon^3 x_i - 42b_\epsilon^3 y_j + 126b_\epsilon^2)a_\epsilon^2 + (33b_\epsilon^5 - 10b_\epsilon^4 x_i \notag \\
&\qquad - 33b_\epsilon^4 y_j + 96b_\epsilon^3)a_\epsilon + 10b_\epsilon^6 - 10 y_jb_\epsilon^5 + 39b_\epsilon^4)/120, \notag	\\
&	\phi_5=(5a_\epsilon^7 + (14b_\epsilon - 5 x_i)a_\epsilon^6 + (26b_\epsilon^2 - 14b_\epsilon x_i - 5b_\epsilon y_j + 19)a_\epsilon^5 + (36b_\epsilon^3 - 21b_\epsilon^2 x_i  \notag \\
&\qquad - 14b_\epsilon^2 y_j + 47b_\epsilon)a_\epsilon^4 + (31b_\epsilon^4 - 22b_\epsilon^3 x_i - 21b_\epsilon^3 y_j + 74b_\epsilon^2)a_\epsilon^3 + (22b_\epsilon^5 - 10b_\epsilon^4 x_i\notag \\
&\qquad  - 22b_\epsilon^4 y_j + 76b_\epsilon^3)a_\epsilon^2 + (10b_\epsilon^6 - 10b_\epsilon^5 y_j + 39b_\epsilon^4)a_\epsilon + 9b_\epsilon^5)/240, \notag\\
	&\phi_6=a_\epsilon(7a_\epsilon + b_\epsilon)(5a_\epsilon^3 + 9a_\epsilon^2b_\epsilon + 12a_\epsilon b_\epsilon^2 + 10b_\epsilon^3)(a_\epsilon + b_\epsilon)/480.\notag
\end{align}
As $0<x_i<1$, $0<y_j<1$, $1\le a_\epsilon$, and $1\le b_\epsilon$, we infer that
\be\label{psik}
\phi_k\ge \psi_k,\qquad k=0,\dots, 5, \quad \text{and} \quad \phi_6\ge 0,
\ee
where 
\begin{align}
&\psi_0=12(a_\epsilon^2 - a_\epsilon  + b_\epsilon^2 - b_\epsilon + 2), \notag\\
&\psi_1=6(a_\epsilon + b_\epsilon)(a_\epsilon^2 - a_\epsilon + b_\epsilon^2 - b_\epsilon + 2), \notag\\
&\psi_2=(21a_\epsilon^4 + (30b_\epsilon - 21)a_\epsilon^3 + (42b_\epsilon^2 - 51 b_\epsilon + 52)a_\epsilon^2 + (30b_\epsilon^2 - 51 b_\epsilon + 60)a_\epsilon b_\epsilon \notag\\
& \qquad + (21b_\epsilon^2 - 21 b_\epsilon + 52)b_\epsilon^2)/10, \notag\\
&	\psi_3=(11a_\epsilon^5 + (21b_\epsilon - 11)a_\epsilon^4 + (32b_\epsilon^2 - 32b_\epsilon + 32)a_\epsilon^3 + (32b_\epsilon^2 - 42b_\epsilon  + 52)a_\epsilon^2b_\epsilon  \notag\\
&\qquad  + (21b_\epsilon^2 - 32b_\epsilon + 52)a_\epsilon b_\epsilon^2 + (11b_\epsilon^2 - 11 b_\epsilon + 32)b_\epsilon^3)/20,\notag\\
&	\psi_4=(14a_\epsilon^6 + (33b_\epsilon - 14)a_\epsilon^5 + (56b_\epsilon^2 - 47b_\epsilon  + 47)a_\epsilon^4 + (66b_\epsilon^2 - 75b_\epsilon  + 96)a_\epsilon^3b_\epsilon \notag\\ 
&\qquad + (52b_\epsilon^2 - 75b_\epsilon  + 126)a_\epsilon^2b_\epsilon^2 + (33b_\epsilon^2 - 43b_\epsilon  + 96)a_\epsilon b_\epsilon^3 + (10b_\epsilon^2 - 10 b_\epsilon + 39)b_\epsilon^4)/120, \notag\\
&	\psi_5=(5a_\epsilon^7 + (14b_\epsilon - 5 )a_\epsilon^6 + (26b_\epsilon^2 - 19b_\epsilon + 19)a_\epsilon^5 + (36b_\epsilon^2 - 35b_\epsilon + 47)a_\epsilon^4b_\epsilon  \notag\\
&\qquad + (31b_\epsilon^2 - 43b_\epsilon + 74)a_\epsilon^3b_\epsilon^2 + (22b_\epsilon^2 - 32b_\epsilon+ 76)a_\epsilon^2b_\epsilon^3  + (10b_\epsilon^2 - 10b_\epsilon + 39)a_\epsilon b_\epsilon^4 + 9b_\epsilon^5)/240.\notag
\end{align}
Utilizing $1\le a_\epsilon$, $1\le b_\epsilon$ and the simple fact:  $\mu_2 b_\epsilon^2 +\mu_1b_\epsilon + \mu_0>0$ for any $b_\epsilon \in \R$ if  $\mu_2>0$ and $\mu_1^2-4\mu_2\mu_0<0$, we deduce
\be\label{positive}
\psi_0 \ge 24, \quad \text{and} \quad	\psi_k \ge 0,\qquad k=1,\dots, 5.
\ee
Based on \eqref{L:tilde:2}--\eqref{positive} 
\be\label{greater:than:24}
	\mathcal{\tilde{L}}_h(\theta)_{i,j}=	\sum_{p=0}^6 \phi_ph^p\ge 	\sum_{p=0}^5 \psi_ph^p +\phi_6h^6 \ge 24, \quad \text{for any }  h>0 \text{ and } (x_i,y_j)\in \Omega.
\ee

\textbf{Step 4 (the sixth-order convergence rate):}
From $\Omega=(0,1)^2$ and \eqref{xiyj}, 
we define 
\[
\begin{split}
& \Omega_h:=\{(x_i,y_j):\ (x_i,y_j) \in \Omega\}, \qquad \overline{\Omega}_h:=\{(x_i,y_j):\ (x_i,y_j) \in \overline{\Omega}\}, \qquad \partial \Omega_h:=\overline{\Omega}_h \cap \partial \Omega,\\
& V(\overline{\Omega}_h):=\{(v)_{i,j}=v(x,y)|_{(x,y)=(x_i,y_j)}\in \mathbb{R}:\  (x_i,y_j) \in \overline{\Omega}\}. 
\end{split}
\]
Now the Dirichlet boundary condition $u=g$ on $\partial \Omega$, $\mathcal{L}_h$ in \eqref{FDMs:u:interior}, and $\mathcal{\tilde{L}}_h$ in \eqref{L:tilde} imply that our proposed FDM can be rewritten as: find $u_h \in
V(\overline{\Omega}_h)$ such that:
\be\label{DeltahuhF}
\mathcal{\tilde{L}}_h (u_h)_{i,j}=-F_{i,j}
 \quad \mbox{for}\quad (x_i,y_j)\in \Omega_h \quad\mbox{with}\quad u_h=g\quad \mbox{on}\quad
\partial \Omega_h.
\ee
According to \eqref{M:Matrix} and \eqref{Ckl:sum},
 the discrete maximum principle: 
\be\label{max:principle} 
\max_{(x_i,y_j)\in \Omega_h} v(x_i,y_j)\le \max_{(x_i,y_j)\in \partial \Omega_h} v(x_i,y_j), \quad \text{if} \quad \mathcal{\tilde{L}}_h v\ge 0 \ \text{in} \ \Omega_h, \ \text{for any} \ v\in V(\overline{\Omega}_h),
\ee
is proved by contradiction in the following \eqref{vmn:assume}--\eqref{max:prin:proof}:
Suppose that $\max\limits_{(x_i,y_j)\in \Omega_h} (v)_{i,j}> \max\limits_{(x_i,y_j)\in \partial \Omega_h} (v)_{i,j}$.
So we assume that
\be\label{vmn:assume}
(v)_{m,n}= v(x_m,y_n)=\max\left\{ v(x_i,y_j) :  (x_i,y_j)\in \Omega_h\right\}.
\ee
From  \eqref{Ckl:sum}
\[
\sum_{k,\ell\in\{-1,0,1\} \atop k\ne 0, \  \ell\ne 0} -C_{k,\ell} =  C_{0,0}.
\]
By \eqref{M:Matrix} 
\[
-C_{k,\ell}\ge  0 \quad \text{if} \quad (k,\ell)\ne (0,0), \quad \text{and} \quad  C_{0,0}>  0.
\]
Then
\be\label{Ckl:sign:ineq:1}
\sum_{k,\ell\in\{-1,0,1\} \atop k\ne 0, \  \ell\ne 0} -C_{k,\ell} (v)_{m+k,n+\ell} \le C_{0,0} (v)_{m,n}.
\ee
Based on  $\mathcal{\tilde{L}}_h v\ge 0$ in $\Omega_h$, and the definition of $\mathcal{\tilde{L}}_h$ in \eqref{L:h:u:1} and \eqref{L:tilde}, we demonstrate that 
\be\label{Ckl:sign:ineq:2}
0\le h^2(\mathcal{\tilde{L}}_h v)_{m,n}=-\sum_{k,\ell =-1}^1 C_{k,\ell} (v)_{m+k,n+\ell}=-C_{0,0} (v)_{m,n}-\sum_{k,\ell\in\{-1,0,1\} \atop k\ne 0, \  \ell\ne 0} C_{k,\ell} (v)_{m+k,n+\ell}.
\ee
Now, \eqref{Ckl:sign:ineq:1} and \eqref{Ckl:sign:ineq:2} result in
\be\label{max:prin:proof}
C_{0,0} (v)_{m,n}\le \sum_{k,\ell\in\{-1,0,1\} \atop k\ne 0, \  \ell\ne 0} -C_{k,\ell} (v)_{m+k,n+\ell} \le C_{0,0} (v)_{m,n}.
\ee
Equality holds throughout and $v$ attains the maximum value at all its nearest neighbors of $(x_m,y_n)$. Applying the same argument to the neighbors in $\Omega_h$ and repeat this argument, we  can say that $v$ must be a constant which  contradicts \eqref{vmn:assume}.
So \eqref{vmn:assume}--\eqref{max:prin:proof} prove the discrete maximum principle \eqref{max:principle}.

 \eqref{truncation:error:h:6} and the defined $\mathcal{\tilde{L}}_h$ in  \eqref{L:tilde} indicate
	\be\label{Rxiyj}
	\mathcal{\tilde{L}}_h 	(u)_{i,j}=-F_{i,j}+R_{i,j}, \quad \text{with} \quad	|R_{i,j}|\le
	c_3 h^6,  \quad \text{for any }  h>0 \text{ and } (x_i,y_j)\in \Omega_h, 
	\ee
	where $R_{ij}$ is the truncation error of $	\mathcal{\tilde{L}}_h 	(u)_{i,j}$ at the grid point $(x_i,y_j)\in \Omega_h$ and $0<c_3$ is independent of $h$. It is clear from \eqref{DeltahuhF} that   
	\be\label{intersect:trunR}
\mathcal{\tilde{L}}_h 	(u- u_h)_{i,j}=R_{i,j} \quad \mbox{for}\quad (x_i,y_j)\in \Omega_h \quad \mbox{with}\quad (u- u_h)_{i,j}=0\quad \mbox{for}\quad (x_i,y_j)\in \partial \Omega_h.
	\ee
From \eqref{greater:than:24}, and $c_3>0$ in \eqref{Rxiyj}, we observe that
	\be\label{4CH6}
	\tfrac{1}{24}c_3h^6(\mathcal{\tilde{L}}_h  \theta)_{i,j}\ge 	
	c_3 h^6,\quad \text{for any }  h>0 \text{ and } (x_i,y_j)\in \Omega_h.
	\ee
	Let  
	\[
	(E_h)_{i,j}:= u(x_i,y_j)-(u_h)_{i,j} \quad \text{with} \quad(x_i,y_j)\in \overline{\Omega}_h. 
	\]
	Then
	\be\label{Eh:bd}
		(E_h)_{i,j}=0 \quad \text{for} \quad (x_i,y_j)\in \partial\Omega_h. 
	\ee
	It follows from \eqref{intersect:trunR}  that
	\be\label{order6:proof:compare}
	\begin{split}
	(\mathcal{\tilde{L}}_h  (E_h+	\tfrac{1}{24} c_3 h^6 \theta))_{i,j}
	&=(\mathcal{\tilde{L}}_h  E_h)_{i,j}+	\tfrac{1}{24} c_3 h^6 (\mathcal{\tilde{L}}_h  \theta)_{i,j}\\
	&= R_{i,j}+	\tfrac{1}{24} c_3 h^6 (\mathcal{\tilde{L}}_h  \theta)_{i,j}, \quad \text{for any }  h>0 \text{ and } (x_i,y_j)\in \Omega_h.
	\end{split}
	\ee
According to \eqref{Rxiyj}, \eqref{4CH6}, and \eqref{order6:proof:compare},
	\be\label{L:h:E:theta}
	(\mathcal{\tilde{L}}_h  (E_h+	\tfrac{1}{24} c_3 h^6 \theta))_{i,j}\ge 0, \quad \text{for any }  h>0 \text{ and } (x_i,y_j)\in \Omega_h.
	\ee
	Recall the definition of the comparison function $\theta$ in \eqref{L:tilde} with $a_\epsilon,b_\epsilon \ge 1$ and $\Omega=(0,1)^2$, $\theta$ satisfies
	\be \label{theta:max}
	0\le \theta \le a_\epsilon^2+b_\epsilon^2 \quad \text{on} \quad \overline{\Omega}_h.
	\ee
	As $c_3> 0$  in \eqref{Rxiyj}, 
	\[
		\max_{(x_i,y_j)\in \Omega_h} (E_h)_{i,j}
	\le
	\max_{(x_i,y_j)\in \Omega_h} (E_h+	\tfrac{1}{24} c_3 h^6 \Theta)_{i,j}.
	\]
	\eqref{max:principle}, \eqref{L:h:E:theta}, and $c_3>0$ yield
	\[
	\max_{(x_i,y_j)\in \Omega_h} (E_h)_{i,j} \le 	\max_{(x_i,y_j)\in \partial \Omega_h} (E_h+	\tfrac{1}{24} c_3 h^6 \theta)_{i,j}\le \max_{(x_i,y_j)\in \partial \Omega_h} (E_h)_{i,j}+
	\tfrac{1}{24} c_3 h^6 \times \max_{(x_i,y_j)\in \partial \Omega_h}(\theta)_{i,j}.
	\] 
We infer from \eqref{Eh:bd} and \eqref{theta:max} that	
	\[
		\max_{(x_i,y_j)\in \Omega_h} (u-u_h)_{i,j} \le Ch^6, \quad \text{with} \quad 	C=\frac{a_\epsilon^2+b_\epsilon^2}{24}  c_3, \quad \text{for any }  h>0.
	\]
Apply the similar technique to $-E_h$, we finally prove \eqref{convergence:order:6}.
\end{proof}
	\subsection{The sixth-order FDM for \bm{$a\le b $}}\label{sec:case2}
Similar to  \cref{theorem:interior}, we construct the sixth-order 9-point compact FDM for the model problem \eqref{Model:Original} with $a\le b $ in this section. By our calculation, we observe that $c_{k,\ell,p}$ with $p=0,\dots,3$ and $a\le b $ are exactly the same as those in \eqref{c:h:0}--\eqref{c:h:3}. So, we only provide the following notations for the simplification:\\
\textbf{coefficients of $h^4$ of the  LHS of the FDM:}
\begin{align}\label{case2:p:4}
	& c_{-1, -1, 4} := -(9 a_\epsilon ^4 + 40 a_\epsilon ^3 b_\epsilon   + 60 a_\epsilon ^2 b_\epsilon ^2  + 40 a_\epsilon b_\epsilon ^3   + 11 b_\epsilon ^4)/240, \notag \\
	& c_{-1, 0, 4} := -(21 a_\epsilon ^4 + 43  a_\epsilon ^3 b_\epsilon  + 33 a_\epsilon ^2  b_\epsilon ^2 + 13 a_\epsilon b_\epsilon ^3   + 2 b_\epsilon ^4)/120 ,    \notag \\
	&c_{-1, 1, 4} := -(9 a_\epsilon ^4 +  b_\epsilon ^4)/240, \quad c_{0, -1, 4} := -(13 a_\epsilon ^3 b_\epsilon   + 33 a_\epsilon ^2 b_\epsilon ^2  + 43 a_\epsilon b_\epsilon ^3   + 23 b_\epsilon ^4)/120 ,\\
	& c_{0, 0, 4} := (9a_\epsilon ^4  + 13 b_\epsilon ^4 )/40+ 4 a_\epsilon ^2 b_\epsilon ^2 /5+ 19 (a_\epsilon ^3 b_\epsilon    +a_\epsilon b_\epsilon ^3 ) /30 ,\quad c_{0, 1, 4} := 0,  \notag \\
	& c_{1, -1, 4} :=  (a_\epsilon ^4 - 11 b_\epsilon ^4)/240,\quad  c_{1, 0, 4} :=  (a_\epsilon ^4 -  b_\epsilon ^4)/60, \quad c_{1, 1, 4} :=  (a_\epsilon ^4 -  b_\epsilon ^4)/240,  \notag
\end{align}
\textbf{coefficients of $h^5$ of the  LHS of the FDM:}
\begin{align}\label{case2:p:5}
	& c_{-1, -1, 5} := - a_\epsilon b_\epsilon  r_1 ( a_\epsilon ^2 +  a_\epsilon  b_\epsilon  +  b_\epsilon ^2)/24, \notag \\
	& c_{-1, 0, 5} := -(11 a_\epsilon ^5 +5 a_\epsilon ^4 b_\epsilon  +25 a_\epsilon ^3 b_\epsilon ^2 +10 a_\epsilon ^2 b_\epsilon ^3 +21 b_\epsilon ^5)/240, \quad c_{-1, 1, 5} := 0, \notag \\
	& c_{0, -1, 5} := r_1(    5 b_\epsilon -a_\epsilon )( 2a_\epsilon ^3 - 6a_\epsilon ^2  b_\epsilon  +  a_\epsilon  b_\epsilon ^2 - 6 b_\epsilon ^3)/240, \\
	& c_{0, 0, 5} := (18 a_\epsilon ^5 - 29  a_\epsilon ^4 b_\epsilon + 65a_\epsilon ^3  b_\epsilon ^2 + 45 a_\epsilon ^2 b_\epsilon ^3 + 19 a_\epsilon  b_\epsilon ^4 + 86 b_\epsilon ^5)/240, \notag \\
	& c_{0, 1, 5} := 	4\beta, \quad  c_{1, -1, 5} := 	\beta, \quad c_{1, 0, 5} := 	4\beta, \quad c_{1, 1, 5} := 	\beta,\notag
\end{align}
where
\[
\beta:= r_1( b_\epsilon  -  a_\epsilon )( a_\epsilon ^3 - 6 a_\epsilon ^2 b_\epsilon  + 2 a_\epsilon  b_\epsilon ^2 - 7 b_\epsilon ^3)/480,
\]
\textbf{coefficients of $h^6$ of the  LHS of the FDM:}
\begin{align}\label{case2:p:6}
	& c_{-1, -1, 6} := (-20 a_\epsilon ^6  +63 a_\epsilon ^5 b_\epsilon   + 2 a_\epsilon ^4 b_\epsilon ^2 - 199 a_\epsilon ^3 b_\epsilon ^3 - 96 a_\epsilon ^2 b_\epsilon ^4 - 188 a_\epsilon  b_\epsilon ^5 - 210 b_\epsilon ^6)/1920, \notag \\
	&  c_{-1, 0, 6} := 0, \quad c_{-1, 1, 6} := (-5 a_\epsilon ^5 b_\epsilon  + 22 a_\epsilon ^4 b_\epsilon ^2 - 27 a_\epsilon ^3 b_\epsilon ^3 - 16 a_\epsilon ^2 b_\epsilon ^4 - 4 a_\epsilon  b_\epsilon ^5 - 42 b_\epsilon ^6)/1920, \notag \\
	&  c_{0, -1, 6} := (5 a_\epsilon ^6 - 27 a_\epsilon ^5 b_\epsilon  + 49 a_\epsilon ^4 b_\epsilon ^2 - 11 a_\epsilon ^3 b_\epsilon ^3 - 12 a_\epsilon ^2 b_\epsilon ^4 + 38 a_\epsilon  b_\epsilon ^5 - 42 b_\epsilon ^6)/480,  \\
	&c_{0, 0, 6} :=\lambda/96,\quad  c_{0, 1, 6} :=-\lambda/480, \quad  c_{1, -1, 6} :=-\lambda/1920, \quad  c_{1, 0, 6} :=-\lambda/480, \notag \\
	&  c_{1, 1, 6} :=-\lambda/1920, \notag  
\end{align}
where
\[
\lambda:=5 a_\epsilon ^5 b_\epsilon  - 22 a_\epsilon ^4 b_\epsilon ^2 + 27 a_\epsilon ^3 b_\epsilon ^3 + 16 a_\epsilon ^2 b_\epsilon ^4 + 4 a_\epsilon  b_\epsilon ^5 + 42 b_\epsilon ^6, 
\]
\textbf{the  RHS of the FDM:}
\be
\begin{split}\label{case2:Fij}
	F_{i,j}	&:=6f_\epsilon+3h	r_{1} f_\epsilon + h^2 ((21r_2 + 30r_3)f_\epsilon/20 - 	\bv \cdot \nabla f_\epsilon/2 + \Delta f_\epsilon/2) \\
	& \qquad +{h^3r_{1}}	((11 r_2/40 + r_3/4 ) f_{_\epsilon} -  (	\bv \cdot \nabla f_{_\epsilon} - \Delta f_{_\epsilon})/4)\\
	&\qquad + h^4 ( r_{14} f_\epsilon  +r_{5} f_{_\epsilon}^{(1, 0)}  +r_{6} f_{_\epsilon}^{(0, 1)}+r_{7} f_{_\epsilon}^{(2, 0)}+ r_{3}  f_{_\epsilon}^{(1, 1)}/15+ r_{8} f_{_\epsilon}^{(0, 2)}-  2\zeta )\\
	&\qquad + h^5 (  r_{15} f_\epsilon +r_{10} f_{_\epsilon}^{(1, 0)}  +r_{11} f_{_\epsilon}^{(0, 1)}+r_{12} f_{_\epsilon}^{(2, 0)}+ r_{1}r_{3}  f_{_\epsilon}^{(1, 1)}/30+ r_{13} f_{_\epsilon}^{(0, 2)} \\
	&\quad \qquad \quad -  r_{1}\zeta+ r_{1}( \Delta^2  f_{_\epsilon} +2f_{_\epsilon}^{(2, 2)} )/120),
\end{split}	
\ee
where $r_1,\dots r_{15}, \bv,  \zeta $ are defined in \eqref{r1:r15}, \eqref{zeta}, and each $f_\epsilon^{(m,n)}$ is defined in \eqref{fmn}.
\begin{theorem}\label{theorem:2:interior} Assume that the solution $u$, the source term $f$ and the constant coefficient $(a,b)$ of \eqref{Model:Original} satisfy that $u\in C^8(\overline{\Omega})$, $f\in C^6(\overline{\Omega})$, $a\le b$. Let the grid point $(x_i,y_j)\in \Omega$ and define the following 9-point compact FDM:
	\be\label{FDMs:2:u:interior}
	\mathcal{L}_h (u_h)_{i,j} :=\frac{1}{h^2}\sum_{k,\ell=-1}^1 C_{k,\ell} (u_h)_{i+k,j+\ell}=F_{i,j}\Big|_{(x,y)=(x_i,y_j)} \quad \text{with} \quad C_{k,\ell}:=\sum_{p=0}^6 c_{k,\ell ,p}h^p,
	\ee
	where $c_{k,\ell,p}$ is defined in \eqref{c:h:0}--\eqref{c:h:3}, \eqref{case2:p:4}--\eqref{case2:p:6} and $F_{i,j}$ is defined in \eqref{case2:Fij}. Then
	\[
	C_{k,\ell}\le  0 \quad \text{if} \quad (k,\ell)\ne (0,0), \quad   C_{0,0}>  0, \quad  \text{for any mesh size } h>0, \quad \text{and} \quad \sum_{k,\ell=-1}^1 C_{k,\ell}=0.
	\]
	Furthermore,   $\mathcal{L}_h (u_h)_{i,j}$ in \eqref{FDMs:2:u:interior} with $u=g$ on $\partial \Omega$  forms an M-matrix for any mesh size $h$,  and
	\[
	\|u-u_h\|_{\infty}\le Ch^6, \quad  \text{for any mesh size } h>0,
	\]
	where  $C$ is independent of $h$.
\end{theorem}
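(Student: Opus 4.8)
The plan is to follow verbatim the four-step template of the proof of \cref{theorem:interior}, since the $a\le b$ stencil \eqref{c:h:0}--\eqref{c:h:3}, \eqref{case2:p:4}--\eqref{case2:p:6} together with the right-hand side \eqref{case2:Fij} is built with exactly the same design goals: sixth-order consistency, the M-matrix sign pattern, and compatibility with the comparison function $\theta$. Because the coefficients $c_{k,\ell,p}$ for $p=0,1,2,3$ are identical to those of \cref{theorem:interior}, only the terms of order $h^4,h^5,h^6$ and the replacements $r_4\mapsto r_{14}$, $r_9\mapsto r_{15}$ on the right-hand side demand fresh attention; Steps~1 and~4 then transcribe with only cosmetic bookkeeping changes. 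For \textbf{Step 1} (consistency) I would insert the Taylor expansion \eqref{taylor:u} and the new coefficients into $\mathcal{L}_h(u)_{i,j}$ and collect powers of $h$. The shared low-order coefficients reproduce $C_0,\dots,C_3$ exactly as in \eqref{C0:C5}, while the modified $h^4$ and $h^5$ coefficients produce $C_4,C_5$ in which the scalar multipliers of $\mathcal{L}u$ are $r_{14}$ and $r_{15}$ in place of $r_4$ and $r_9$. Since $\mathcal{L}u=f_\epsilon$ and $\mathcal{L}\,\partial^{m+n}u/\partial x^m\partial y^n=\partial^{m+n}f_\epsilon/\partial x^m\partial y^n$, matching against \eqref{case2:Fij} cancels the first six terms and leaves $|\mathcal{L}_h(u_h)_{i,j}-\mathcal{L}_h(u)_{i,j}|\le c_3 h^6$ with $c_3$ independent of $h$, exactly as in \eqref{truncation:error:h:6}.

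The heart of the argument, and the step I expect to be hardest, is \textbf{Step 2} (the M-matrix property). Using $1\le a_\epsilon\le b_\epsilon$, most off-diagonal coefficients in \eqref{c:h:0}--\eqref{c:h:3} and \eqref{case2:p:4} are manifestly nonpositive and the $c_{0,0,p}$ nonnegative. The delicate entries are the mixed-sign $h^5$ and $h^6$ coefficients in \eqref{case2:p:5}--\eqref{case2:p:6}, such as $c_{0,-1,5}$, $\beta$, $c_{-1,-1,6}$, $c_{0,-1,6}$, and $\lambda$, whose signs are not evident from inspection. Mirroring the device of \eqref{ckl:p5}, I would substitute $b_\epsilon=a_\epsilon+\omega$ with $\omega\ge 0$ (the reflection of the substitution used in the $a\ge b$ case) and rewrite each suspect coefficient as a polynomial in $a_\epsilon$ and $\omega$ of determinate sign; for instance the factor $5b_\epsilon-a_\epsilon>0$ together with the cubic $2a_\epsilon^3-6a_\epsilon^2 b_\epsilon+a_\epsilon b_\epsilon^2-6b_\epsilon^3<0$ forces $c_{0,-1,5}\le 0$, and similarly $\beta\le 0$. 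Summing over $p$ then gives $C_{k,\ell}\le 0$ for $(k,\ell)\ne(0,0)$ and $C_{0,0}>0$ for every $h>0$, while a direct computation confirms $\sum_{k,\ell}C_{k,\ell}=0$; the M-matrix conclusion follows from the characterization in \cite{LiZhang2020}.

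Finally, \textbf{Step 3} reuses the same comparison function $\theta=(x-a_\epsilon)^2+(y-b_\epsilon)^2$. Because $\theta$ is quadratic and the coefficients of order $h^0,\dots,h^3$ are unchanged, the expansion $\mathcal{\tilde{L}}_h(\theta)_{i,j}=\sum_{p=0}^6\phi_p h^p$ retains $\phi_0,\dots,\phi_3$ from \cref{theorem:interior}; I would recompute $\phi_4,\phi_5,\phi_6$ from \eqref{case2:p:4}--\eqref{case2:p:6} and, using $0<x_i,y_j<1$, $1\le a_\epsilon\le b_\epsilon$, and the discriminant test $\mu_2 b_\epsilon^2+\mu_1 b_\epsilon+\mu_0>0$ whenever $\mu_2>0$ and $\mu_1^2-4\mu_2\mu_0<0$, establish $\phi_k\ge\psi_k\ge 0$ and hence $\mathcal{\tilde{L}}_h(\theta)_{i,j}\ge 24$ as in \eqref{greater:than:24}. \textbf{Step 4} is then identical to the $a\ge b$ case: the discrete maximum principle \eqref{max:principle}, which depends only on the sign and summation conditions of Step~2, applied to $E_h\pm\tfrac{1}{24}c_3h^6\theta$ with the bound $0\le\theta\le a_\epsilon^2+b_\epsilon^2$, yields $\|u-u_h\|_\infty\le Ch^6$ with $C=(a_\epsilon^2+b_\epsilon^2)c_3/24$ independent of $h$. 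The only genuine work beyond transcription is the sign verification of the high-order coefficients in Step~2 and the matching recomputation of $\phi_4,\phi_5,\phi_6$ in Step~3.
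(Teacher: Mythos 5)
Your proposal is correct and follows exactly the route the paper intends: the paper's own proof of this theorem is simply the remark that it is ``similar to the proof of Theorem~2.1,'' and your four-step transcription — with the sign checks of the new $h^4$--$h^6$ coefficients via the substitution $b_\epsilon=a_\epsilon+\omega$ and the recomputation of $\phi_4,\phi_5,\phi_6$ as the only genuinely new work — is precisely that adaptation. No gaps; your identification of $c_{0,-1,5}$, $\beta$, $c_{0,-1,6}$, and $\lambda$ as the delicate sign verifications is on target.
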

\begin{proof}
Similar to the proof of \cref{theorem:interior}, so we omit the proof.
\end{proof}
	\section{Numerical experiments}\label{numerical:test}
As the exact solution is not available, we verify the sixth-order convergence rate of the proposed FDM  for the 2D transport problem \eqref{Model:Original}  by the following $l_\infty$ norm:
\begin{align*}
	& \|u_h-u_{h/2}\|_{\infty}:=\max_{0\le ij\le N} |(u_h)_{i,j} -(u_{h/2})_{2i,2j}|, \qquad \text{with}\quad  h=1/N.
\end{align*}
%
%

\begin{example}\label{example1}
	\normalfont
	Let  $\Omega=(0,1)^2$. Then
	the functions in \eqref{Model:Original} are given by
	\begin{align*}
		&	\epsilon=10^{-2},\qquad f=\sin(\pi x)\sin(\pi y),\qquad  (a,b)=(1,10^{-2}), \ (10^{-2},1), \qquad g=0.
	\end{align*}
	The numerical results are presented in \cref{tab:1:example,fig:1:example,fig:2:example}.
\end{example}
\begin{table}[htbp]
	\caption{Performance in \cref{example1}  of the proposed FDM.}
	\centering
	\setlength{\tabcolsep}{4mm}{
		\begin{tabular}{c|c|c|c|c}
			\hline
			\multicolumn{1}{c|}{}  &
			\multicolumn{2}{c|}{$(a,b)=(1,10^{-2})$}  &
			\multicolumn{2}{c}{$(a,b)=(10^{-2},1)$} \\
			\hline
			$h$
			&  $\|u_h-u_{h/2}\|_{\infty}$
			&order &  $\|u_h-u_{h/2}\|_{\infty}$ & order \\
			\hline
$1/2^4$  &  2.24815E-01  &    &  2.64991E-01  &  \\
$1/2^5$  &  6.30645E-02  &  1.83  &  2.38346E-01  &  0.15\\
$1/2^6$  &  4.24966E-03  &  3.89  &  3.53978E-02  &  2.75\\
$1/2^7$  &  1.02076E-04  &  5.38  &  9.62929E-04  &  5.20\\
$1/2^8$  &  2.00464E-06  &  5.67  &  1.90399E-05  &  5.66\\
$1/2^9$  &  3.48531E-08  &  5.85  &  3.31949E-07  &  5.84\\
$1/2^{10}$  &  5.77342E-10  &  5.92  &  5.47505E-09  &  5.92\\
$1/2^{11}$  &  5.51370E-12  &  6.71  &  8.75092E-11  &  5.97\\
			\hline
	\end{tabular}}
	\label{tab:1:example}
\end{table}	
\begin{figure}[htbp]
	\centering
	\begin{subfigure}[b]{0.3\textwidth}
		\includegraphics[width=5cm,height=5cm]{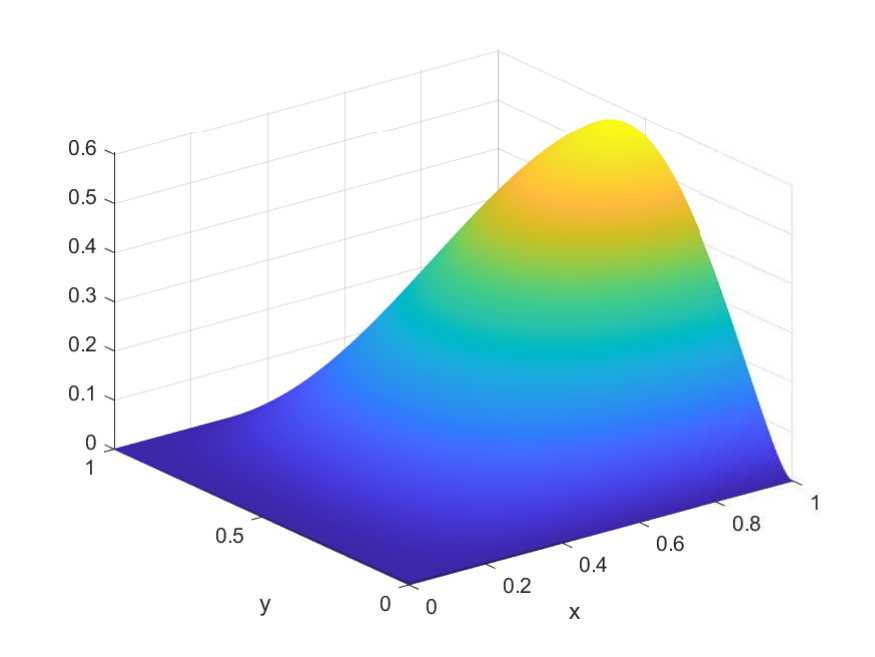}
	\end{subfigure}	
	\begin{subfigure}[b]{0.3\textwidth}
		\includegraphics[width=5cm,height=5cm]{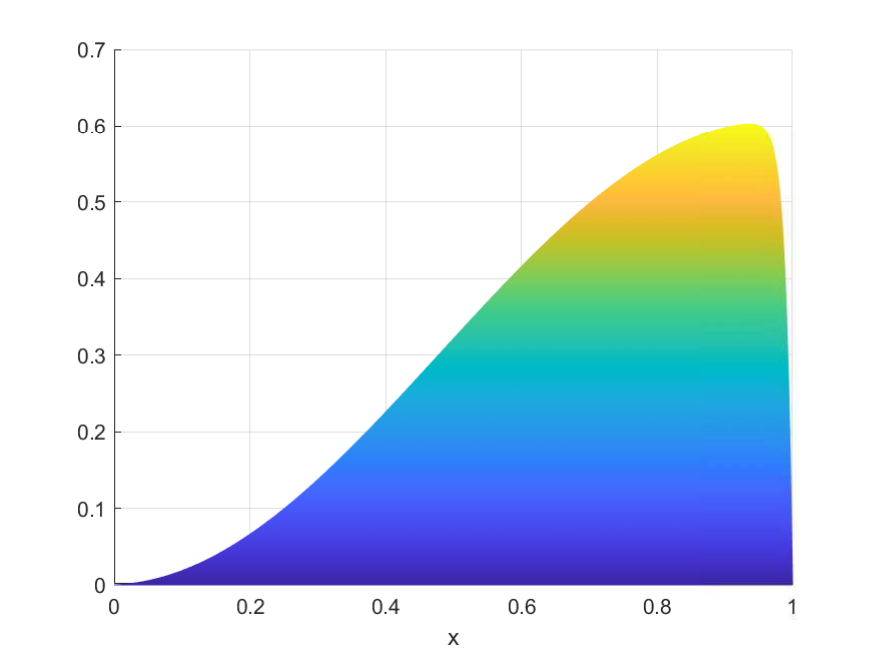}
	\end{subfigure}
	\begin{subfigure}[b]{0.3\textwidth}
		\includegraphics[width=5cm,height=5cm]{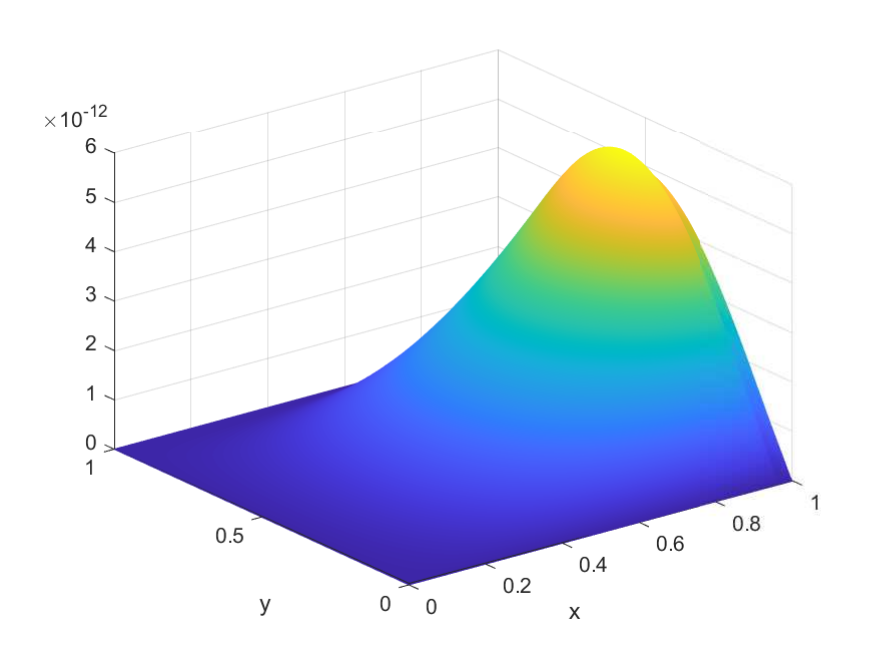}
	\end{subfigure}
	\caption{\cref{example1}: $u_{h/2}$ (left and middle),  and $|u_h-u_{h/2}|$ (right) at all grid points in $\overline{\Omega}=[0,1]^2$ with $h=\tfrac{1}{2^{11}}$ and $(a,b)=(1,10^{-2})$.}
	\label{fig:1:example}
\end{figure}	
\begin{figure}[htbp]
	\centering
	\begin{subfigure}[b]{0.3\textwidth}
		\includegraphics[width=5cm,height=5cm]{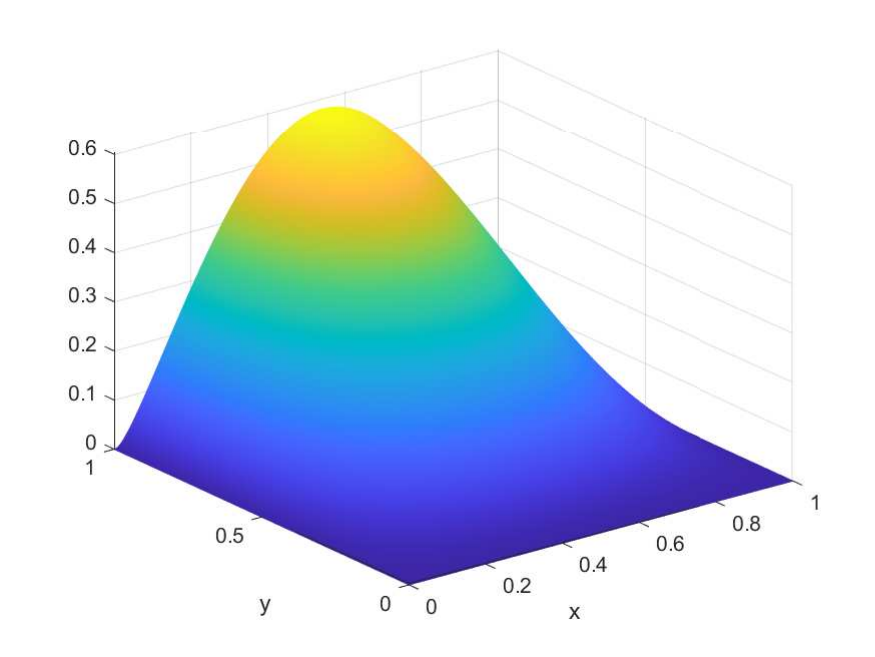}
	\end{subfigure}	
	\begin{subfigure}[b]{0.3\textwidth}
		\includegraphics[width=5cm,height=5cm]{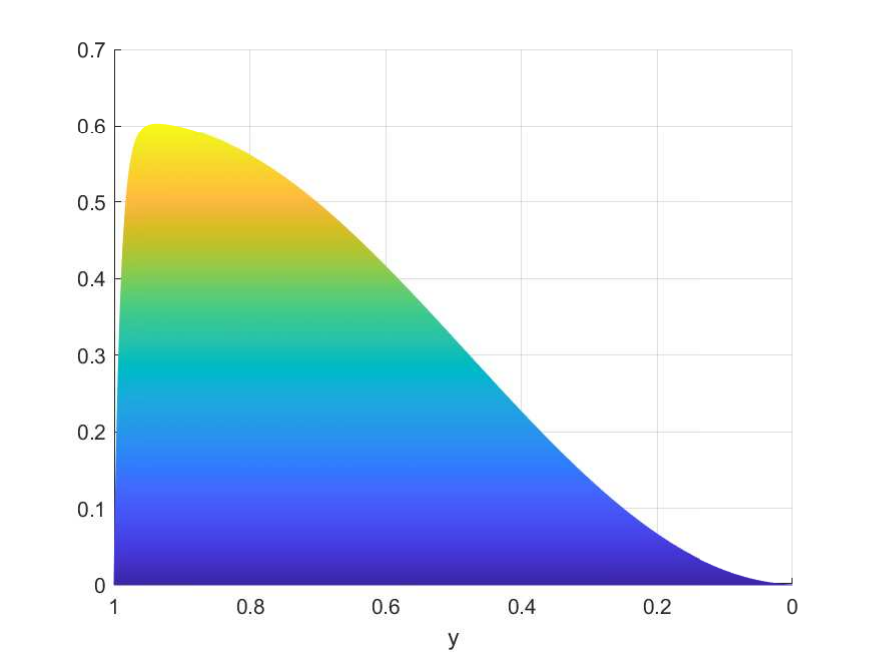}
	\end{subfigure}
	\begin{subfigure}[b]{0.3\textwidth}
		\includegraphics[width=5cm,height=5cm]{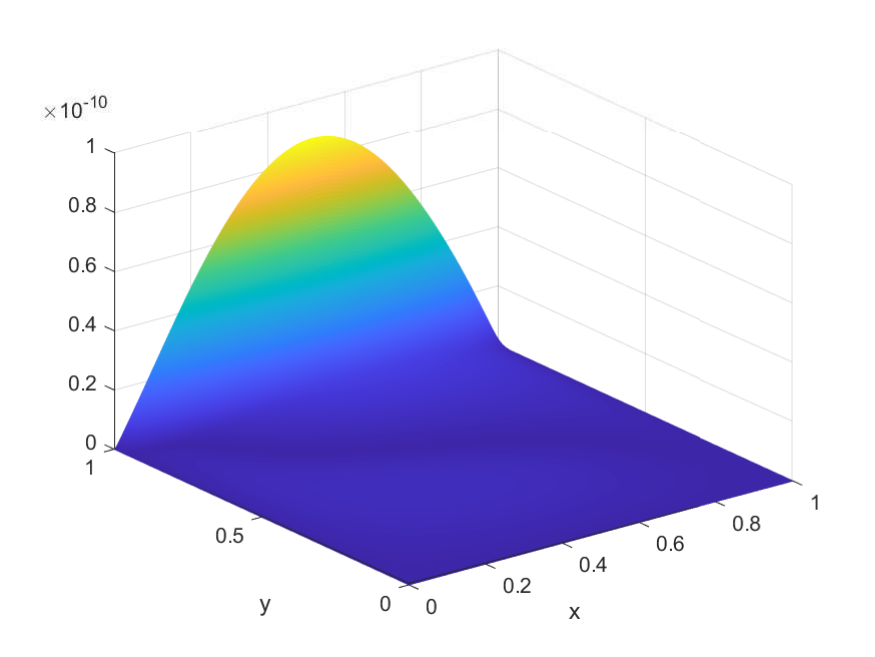}
	\end{subfigure}
	\caption{\cref{example1}:  $u_{h/2}$ (left and middle),  and $|u_h-u_{h/2}|$ (right) at all grid points in $\overline{\Omega}=[0,1]^2$ with $h=\tfrac{1}{2^{11}}$ and $(a,b)=(10^{-2},1)$.}
	\label{fig:2:example}
\end{figure}	
	\section{Contribution}\label{sec:contr}
	To the best of our knowledge, no compact FDM has been proposed in the literature that can	rigorously prove sixth-order convergence rate of the maximum pointwise error  for the 2D transport problem \eqref{Model:Original} for any mesh size $h$. In this paper, we propose a sixth-order compact 9-point FDM for the 2D transport problem with the constant coefficient and the Dirichlet boundary condition using the uniform mesh in a unit square. The matrix of our FDM is an M-matrix for any mesh size $h$ and the expression of the stencil is provided explicitly. With the aid of the explicit formula, the proposed FDM can be easily implemented to reproduce our numerical results and verify the sixth-order pointwise accuracy. Furthermore, we also construct the explicit formula of the comparison function to theoretically prove the sixth-order convergence rate of the error in the  $l_\infty$ norm by the discrete maximum principle. Remarkably, the proof of the sixth-order convergence does not require any constraints on the mesh size $h$. In our future work, we aim to improve the performance of our FDM by incorporating the optimal FDM technique from \cite{Layton1990,Layton1993} and the nonuniform mesh strategy from \cite{Ge2011}.

\noindent\textbf{Acknowledgment}

The author gratefully acknowledges William Layton (wjl@pitt.edu, Department of Mathematics, University of Pittsburgh, Pittsburgh, PA 15260 USA) for introducing this valuable two-dimensional transport problem, which served as the inspiration for the present paper.

\end{document}